\theoremstyle{plain}
\newtheorem{theorem}{Theorem}
\newtheorem{proposition}{Proposition}
\newtheorem{lemma}{Lemma}
\theoremstyle{remark}
\newtheorem{example}{Example}
\def\d{\operatorname{dist}}
\def\To{\longrightarrow}
\def\sig{\operatorname{sign}}
\def\Lip{\operatorname{Lip}}
\def\L{\mathscr L}
\def\e{\varepsilon}
\def\R{\mathbb{R}}
\def\N{\mathbb{N}}
\def\H{\mathbb{H}}
\def\I{\mathbb{I}}
\def\U{\mathscr{U}}
\def\V{\mathscr{V}}
\def\LX{\Lip^*(X)}
\def\LY{\Lip(Y)}
\def\LE{\Lip E}
\title{Quiz your maths: do the uniformly continuous  functions on the line form a ring?}
\author{F\'elix Cabello S\'anchez and Javier Cabello S\'anchez}
\address{Departamento de Matem\'{a}ticas, UEx, and IMUEx, 06071-Badajoz, Spain}
\email{fcabello@unex.es, coco@unex.es}
\subjclass[2010]{46E05, 54C35.}
\thanks{Key words and phrases:
Uniformly continuous
function, Lipschitz function, lattice homomorphism, $f$-ring structure.}
\thanks{Supported in part by DGICYT project MTM2016$\cdot$76958$\cdot$C2$\cdot$1$\cdot$P (Spain) and Junta de Extremadura programs GR$\cdot$15152 and  IB$\cdot$16056.}
\begin{document}

\begin{abstract}
The paper deals with the interplay between boundedness, order and ring structures in function lattices on the line and related metric spaces. It is shown that the lattice of all Lipschitz functions on a normed space $E$ is isomorphic to its sublattice of bounded functions if and only if $E$ has dimension one. The lattice of Lipschitz functions on $E$ carries a ``hidden'' $f$-ring structure with a unit, and the same happens to the (larger) lattice of all uniformly continuous functions for a wide variety of metric spaces.

An example of a metric space whose lattice of uniformly continuous functions supports no unital $f$-ring structure is provided. 
\end{abstract}

\maketitle

\markboth{{\it F. and J. Cabello S\'anchez}}{{\it Quiz your maths}}

\section{Introduction}
This paper deals with the interplay between boundedness, order and ring structures in the lattices of uniformly continuous and Lipschitz functions on the real line and other related metric spaces.

\subsection{An esoteric motivation}\label{sec:esoteric}
Let $U$ denote the vector lattice of all uniformly continuous functions on the half-line $\H=[0,\infty)$ and $U^*$ the sublattice of bounded functions.
Although the paper can be read without any reference to the results on homomorphisms quoted below, the truth is that our initial motivation stemmed from the following facts that we can regard as ``empirical data'':

\smallskip

$\bigstar$ Each lattice homomorphism $\phi:U\To\R$ has the form
$$
\phi(g)=k\lim_{\U(n)}\frac{g(2^{c+n}-1)}{2^{c+n}}\quad\quad(g\in U),
$$
where $k$ is the value of $\phi$ at the function $t\longmapsto 1+t$, $\U$ is an ultrafilter on $\N_0$, the nonnegative integers and $c\in[0,1]$. Moreover, $(c,\U)$ and $(d,\V)$ induce the same homomorphism if and only if
$c=1, d=0$ and $\V=1+\U$,  that is, the sets of $\V$ are obtained
by translating those of $\U$ by a unit, or vice-versa (cf. \cite[\S~3]{fine}).

\smallskip

$\bigstar$ Each homomorphism $\phi:U^*\To\R$ has the form
$$
\phi(f)=k\lim_{\U(n)}f(c+n)\quad\quad(f\in U^*),
$$
where $k=\phi(1)$ and $c,\U$ are as before, including the rule about when $(c,\U)$ and $(d,\mathscr V)$ represent the same homomorphism. This is just a description of the Samuel-Smirnov compactification of the half-line and most probably belongs to the topological folklore. The closest results that we have found in the literature are \cite[Theorem 2.1]{turkiyos} and \cite[Lemma 2.1]{japo}.

\smallskip

These statements, that we have slightly edited in order to match with the notation that we shall use along the paper, can be made more precise and provide a quite natural homeomorphism between the spaces of homomorphisms of $U$ and $U^*$ which suggest not only that $U$ and $U^*$ must be very similar lattices but also that one might switch between them sending each $f\in U^*$ to the function defined by
\begin{equation}\label{sending}
t\longmapsto (1+t)f(\log_2(1+t)),
\end{equation}
or, going in the opposite direction, sending $g\in U$ to
\begin{equation}\label{sending2}
s\longmapsto g(2^{s}-1)/{2^{s}}.
\end{equation}

Incidentally, at a certain stage of this research, one of the present authors conjectured that $U^*$ and $U$ are isomorphic lattices, while the other one proved that they aren't... Fortunately, Leung and Tang had already put the matter to rest in \cite[\S 6.6, Lemma~6.37]{l-t}:

\medskip

$\bigstar$ If $X$ and $Y$ are metric spaces for which $U(X)$ and $U^*(Y)$ are order isomorphic (that is, there is a not necessarily linear bijection that  preserves order in both directions) then $U(X)=U^*(X)$ and $X$ is uniformly homeomorphic to $Y$.

\medskip
In our defence we can only add that the booklet \cite{l-t} is very recent.

\subsection{Plan of the paper}
Let us describe the organization of the paper and summarize the main results. 
The remainder of this Section contains some definitions, mainly to fix the notation.

In 
Section~\ref{sec:Lip} we transfer the transformations in (\ref{sending}) and (\ref{sending2}) to an arbitrary Banach space and we study their action on Lipschitz and uniformly continuous functions. It is perhaps a little ironic that while  our initial attempt to produce an isomorphism between $U$ and $U^*$ was doomed to fail, it works fine with  Lipschitz functions. Thus, the lattice of all Lipschitz functions on the (half) line is isomorphic to its sublattice of bounded functions. 
We then show that this result does not generalize to higher dimensions. These results have loose connections with the ideas of \cite[Section~5]{l-t} and \cite[Proposition 1.7.5]{wi}.

Moving in a different direction, and motivated by the fact that for any metric space $X$, the lattice $\Lip(X)$ carries a unital $f$-ring structure, Section~3 explores the possibility that also the lattices $U(X)$ carry such  ``hidden'' $f$-ring structures. It is shown that this is indeed the case for a wide variety of metric spaces, including length and quasiconvex spaces.


The last part of the paper, somewhat in the spirit of Menard's celebrated work on the game of chess \cite{jlb}, 
presents an example of a metric space whose lattice of uniformly continuos functions 
cannot be given any unital $f$-ring structure.


\section*{Notations, conventions}

\subsection{Function lattices}
A function lattice $\L$ on the set $X$ is just a linear subspace of the lattice $\R^X$ which is also a sublattice. All operations on  $\R^X$ are defined pointwise.
If $X$ is a metric space, we denote by $C(X), U(X)$ and $\Lip(X)$ the lattices of continuous, uniformly continuous and Lipschitz functions on $X$, respectively and by 
$C^*(X), U^*(X)$ and $\Lip^*(X)$ the corresponding sublattices of bounded functions.

We denote the distance function by $d$ and by $L(f)$ the Lipschitz constant of $
f:X\To\R$; if $f$ is bounded, then we write $\|f\|_\infty=\sup_{x}|f(x)|$. The norm in $\LX$ is $\|f\|_{\LX}=\|f\|_\infty+L(f)$. The norm in $\Lip(X)$ is defined as $\|f\|_{\Lip(X)}=|f(z)|+L(f)$, where $z$ is a fixed, ``distinguished'' point (the origin, if $X$ is a normed space). The function $\Delta_z(x)=1+d(x,z)$ will be used frequently. When $X$ is a normed space, then $\Delta_0=1+\|\cdot\|$.

\subsection{Homomorphisms}
By a homomorphism of vector lattices we mean a linear map preserving joins and meets (equivalently, absolute values). Given a vector lattice $\mathscr L$,  we denote by $H(\mathscr L)$ the set of all homomorphisms $\phi:\mathscr L\to\mathbb R$.

As $H(\mathscr L)$ is a subset of $\R^\L$ we can equip it with the relative product topology, which is called the pointwise topology in this context: a typical neighbourhood of $\phi$ has the form
$$
\{\psi\in H(\mathscr L): |\phi(f_i)-\psi(f_i)|<\varepsilon \text{ for $1\leq i\leq n$}\},
$$
where $f_1,\dots, f_n\in \mathscr L$ and $\varepsilon >0$. This is the only topology that we will consider on $H(\mathscr L)$.

\subsection{$f$-rings}
These are vector lattices $\mathscr L$ equipped with a product $M:\mathscr L\times\mathscr L\to\mathscr L$ which, in addition to satisfying the usual algebraical requirements (distributive, associative), is a lattice homomorphism in each variable when the other argument is a fixed positive element of $\mathscr L$.
Equivalently, one has
$$
|M(f,g)|=M(|f|,|g|)\quad\quad(f,g\in \mathscr L).
$$
This is not the original definition, but an equivalent condition; see \cite[Theorem~3.15]{johnson}. According to Johnson, the class of $f$-rings was isolated by Birkhoff and Pierce to avoid some disquieting pathologies of the broader class of lattice ordered rings. Note that every function lattice which is closed under pointwise multiplication is an $f$-ring. We refer the reader to Steinberg's \cite[Chapter 3]{steinberg} for a very complete and readable introduction to $f$-rings and in particular for the issue of unitability.

\section{Lipschitz affairs}\label{sec:Lip}

\subsection{The main transformations}
In this Section we graft the maps looming through the results quoted in Section~\ref{sec:esoteric} into an arbitrary Banach space and study their main properties. So, given a Banach space $E$, we consider the following mutually inverse automorphisms of $C(E)$:
\begin{align*}
\Phi f(y)&=(1+\|y\|)f\left( \frac{\log(1+\|y\|)}{\|y\|}\: y	\right), \text{ with } \Phi f(0)= f(0)\\
\Gamma g(x)&=e^{-\|x\|}g\left( \frac{e^{\|x\|}-1}{\|x\|}\:x	\right), \text{ with }(\Gamma g)(0)= g(0).
\end{align*}
We have replaced $2$ by the usual basis of the logarithm in (\ref{sending}) and (\ref{sending2}). Otherwise these maps consist of copies of the maps based on the half-line ``pasted'' along the rays of $E$. Actually we could replace $E$ by any subset closed under homotheties. The following result summarizes the main properties of these maps:

\begin{proposition}\label{prop:Gamma}
With the preceding notations:
\begin{itemize}
\item[(a)] $\Phi$ and $\Gamma$ are mutually inverse automorphisms of $C(E)$.
\item[(b)] $\Gamma$ maps $\Lip(E)$ into $\Lip^*(E)$ and $U(E)$ into $U^*(E)$.
\item[(c)] If $\Delta_0(x)=1+\|x\|$, then $\Gamma(fg/\Delta_0)=\Gamma(f)\Gamma(g)$.
\item[(d)] If $\mathscr L$ is a linear subspace of $C(E)$ containing every function of the form $fg/\Delta_0$, with $f,g\in\mathscr L$, then $\Gamma[\mathscr L]$ is a subring of $C(E)$, which is unital when $\Delta_0\in\mathscr L$.
\item[(e)] Both $\Lip(E)$ and $U(E)$ satisfy the conditions of Part {\rm(d)}.
\end{itemize}
\end{proposition}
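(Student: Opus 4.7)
The plan is to unify everything via the radial homeomorphism $T:E\to E$ defined by $T(x)=(e^{\|x\|}-1)x/\|x\|$ for $x\neq0$ and $T(0)=0$; since $\|T(x)\|=e^{\|x\|}-1$, we have $\Delta_0(T(x))=e^{\|x\|}$, so
\[
\Gamma g=(g/\Delta_0)\circ T,\qquad \Phi f=\Delta_0\cdot(f\circ T^{-1}).
\]
Part (a) is then immediate: each map is multiplication by a strictly positive continuous function composed with a continuous change of variable, hence both preserve $C(E)$ and are vector-lattice automorphisms, and a direct substitution using $T\circ T^{-1}=\mathrm{id}$ and $\Delta_0(T(x))=e^{\|x\|}$ shows they are mutually inverse. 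Part (c) reduces to the one-line computation $\Gamma(fg/\Delta_0)(x)=f(T(x))g(T(x))/\Delta_0(T(x))^{2}=\Gamma f(x)\,\Gamma g(x)$, and part (d) follows formally: $\Gamma$ is linear by (a), turns $(f,g)\mapsto fg/\Delta_0$ into pointwise multiplication by (c), and maps $\Delta_0$ to the constant function $1$.

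For part (e) I would work with the algebraic identity
\[
\frac{fg}{\Delta_0}(x)-\frac{fg}{\Delta_0}(y)=\frac{(f(x)-f(y))g(x)}{\Delta_0(x)}+\frac{f(y)(g(x)-g(y))}{\Delta_0(x)}+\frac{f(y)g(y)(\Delta_0(y)-\Delta_0(x))}{\Delta_0(x)\Delta_0(y)},
\]
assume $\|x\|\geq\|y\|$ without loss of generality, and exploit the fact that Lipschitz (respectively, uniformly continuous) functions on a Banach space have at most linear growth. The ratios $g(x)/\Delta_0(x)$, $f(y)/\Delta_0(x)$ and $g(y)/\Delta_0(x)$ are then uniformly bounded, while the three remaining factors are controlled by $\|x-y\|$ (via Lipschitz constants) or by the moduli of continuity of $f$ and $g$ evaluated at $\|x-y\|$; summing yields the required estimate.

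The main obstacle will be part (b), since $T$ is far from Lipschitz and no bound on $\Gamma g=(g/\Delta_0)\circ T$ follows by an external composition argument from the (easy) Lipschitz bound on $g/\Delta_0$. Boundedness is immediate, as $g/\Delta_0$ is bounded whenever $g$ has linear growth. For the Lipschitz part I would fix $x_1,x_2\in E$ with $0<\|x_1\|\leq\|x_2\|$, interpose the auxiliary point $z:=\|x_1\|\,x_2/\|x_2\|$, and split $\Gamma g(x_1)-\Gamma g(x_2)$ into an angular difference $e^{-\|x_1\|}[g(T(x_1))-g(T(z))]$, controlled by the sphere estimate $\bigl\|x_1/\|x_1\|-x_2/\|x_2\|\bigr\|\leq 2\|x_1-x_2\|/\|x_1\|$ combined with $(1-e^{-r})\leq r$, and a radial difference lying on the single half-line through $x_2/\|x_2\|$, which reduces to the one-dimensional estimate for the scalar function $r\mapsto e^{-r}g((e^r-1)x_2/\|x_2\|)$ obtained by a standard split using the linear growth of $g$. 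Both pieces come out linear in $\|x_1-x_2\|$; the trivial case $x_1=0$ is handled analogously, and the uniform continuity statement is strictly parallel with the modulus of continuity of $g$ in place of $L(g)$.
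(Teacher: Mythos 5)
Your treatment of (a), (c) and (d) via the change of variables $T$ and the factorization $\Gamma g=(g/\Delta_0)\circ T$ is exactly the paper's (there $T$ is called $\gamma$, with $\gamma(y)=a(\|y\|)y$ and $a(t)=(e^t-1)/t$), and your three-term decomposition for (e) is literally the one used in the paper. For the Lipschitz half of (b) you take a mildly different route: the paper first proves the single inequality $\|\gamma(y)-\gamma(x)\|\le e^{\|y\|}\|y-x\|$ for $\|x\|\le\|y\|$ (by convexity of $a$, interposing the point $a(\|y\|)x$ in the \emph{range}) and then feeds it into a two-term split of $\Gamma g(y)-\Gamma g(x)$, whereas you interpose the point $z=\|x_1\|x_2/\|x_2\|$ in the \emph{domain} and estimate the angular and radial contributions separately. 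Both computations are correct and of comparable length; yours is more transparent geometrically, while the paper's single estimate on $\gamma$ has the advantage of being reusable verbatim in the uniform-continuity argument.

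That reuse is precisely where your proposal has a real gap. For uniformly continuous $g$ the argument is \emph{not} ``strictly parallel with the modulus of continuity of $g$ in place of $L(g)$'': your angular bound gives $\|T(x_1)-T(z)\|\le (e^{\|x_1\|}-1)\cdot 2\|x_1-x_2\|/\|x_1\|$, which blows up as $\|x_1\|\to\infty$ even for fixed small $\|x_1-x_2\|$, and the prefactor $e^{-\|x_1\|}$ multiplies $\omega_g(\|T(x_1)-T(z)\|)$ rather than the displacement itself, so nothing cancels unless $\omega_g$ is essentially linear; the radial step suffers from the same problem, since $|e^{r}-e^{s}|$ is large even when $|r-s|$ is small. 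The paper's fix is the observation that a uniformly continuous function on a normed space is Lipschitz for large distances: for every $\varepsilon>0$ there is $L_\varepsilon(g)$ with $|g(x)-g(y)|\le L_\varepsilon(g)\|x-y\|$ whenever $\|x-y\|\ge\varepsilon$. One then argues by cases: if the displacement in the range is at most $\varepsilon$, use $\omega_g(\varepsilon)$; otherwise rerun the Lipschitz computation with $L_\varepsilon(g)$ in place of $L(g)$. You already invoke the resulting linear-growth property for boundedness and for part (e), so the repair is entirely within reach, but as written the uniform-continuity half of (b) does not follow.
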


\begin{proof}
Consider the function $a:\H\To[1,\infty)$ defined by $a(t)=(e^t-1)/t$ for $t>0$ and $a(0)=1$. Then, if we set $\gamma(y)=a(\|y\|)\:y$ for $y\in E$, we have
$$
\Gamma g(y)=e^{-\|y\|}g\big{(}\gamma(y)\big{)}\quad\quad(g\in C(E), y\in E).
$$
Note that $a$ is an increasing, differentiable, convex function, so for $0\leq s <t$ one has 
$a(t)-a(s)\leq a'(t)(t-s)$. Besides, as $ta(t)=e^t-1$ we have $a(t)+ta'(t)=e^t$ for every $t>0$.
Thus, for any $x,y\in E$ with $\|x\|\leq \|y\|$,
\begin{equation}\label{convex}
\begin{aligned}
\|\gamma(y)-\gamma(x)\|
&\leq
\left\|a(\|y\|)\:y- a(\|y\|)\:x\|+ \|  a(\|y\|)\:x -a(\|x\|)\:x\right\|\\
&\leq a(\|y\|)\|y- x\| + a'(\|y\|)(\|y\|-\|x\|)\|x\|\\
&\leq e^{\|y\|}\|y- x\|.
\end{aligned}
\end{equation}
Let us proceed with the proof.
Part (a) is trivial.
To prove (b) let us first observe that every $g\in U(E)$ is ``Lipschitz for large distances'': for every $\varepsilon>0$ there is a constant $L_\e=L_\e(g)$ such that $|g(x)-g(y)|\leq L_\e\|x-y\|$ whenever $x,y\in E$ are such that $\|x-y\|\geq \e$. This implies that
 $\Delta_0$ is a ``dominant'' function in $U(E)$: for every $g\in U(E)$ there is a constant $C>0$ such that $|g|\leq C\Delta_0$.
Since $\Gamma(\Delta_0)=1_E$ we see that $\Gamma(g)$ is bounded for every $g\in U(E)$.

Now suppose $g$ Lipschitz on $E$.
Clearly, $\Gamma g$ is bounded, with $\|\Gamma(g)\|_\infty\leq \|g\|_{\LE}$. Besides, for $\|y\|\geq \|x\|$ one has, by (\ref{convex}),
\begin{equation}\label{convex2}
\begin{aligned}
\|\Gamma g(y)-\Gamma g(x)\|
&\leq e^{-\|y\|}\| g(\gamma(y))-g(\gamma(x)) \|+ \left(e^{-\|x\|}-e^{-\|y\|}	\right)\| g(\gamma(x)) \|\\
&\leq e^{-\|y\|}L(g)e^{\|y\|}\|y-x\|+
 e^{-\|x\|} \|y-x\|\|g(\gamma(x))\|\\
 &\leq 
 \big{(}L(g)+ \|\Gamma g\|_\infty\big{)}\|y-x\|
\end{aligned}
\end{equation}
so $\Gamma g$ is Lipschitz.

To complete the proof of (b) it suffices to see that $\Gamma$ preserves uniform continuity.
Pick $g\in U(E)$. Take sequences $(x_n)$ and $(y_n)$ in $E$ such that $\|x_n-y_n\|\to 0$. We want to see that
$$
\Gamma g(x_n)-\Gamma g(y_n)= e^{-\|x_n\|}g\left(\gamma(x_n) \right)
-
e^{-\|y_n\|}g\left(\gamma(y_n) \right) \To 0
$$
as $n\to\infty$. This is obvious if $\gamma(x_n)-\gamma(y_n)\longrightarrow 0$, thus, passing to a subsequence if necessary and exchanging $x_n$ and $y_n$ when 
 $\|x_n\|>\|y_n\|$, we may assume and do that:
\begin{itemize}
\item $\|x_n\|\leq\|y_n\|$ for all $n$,
\item $\|\gamma(x_n)-\gamma(y_n)\|\geq \e$ for some $\e>0$ and all $n$.
\end{itemize}
Then, replacing $L(g)$ by $L_\e(g)$ and proceeding as in (\ref{convex2}), one obtains the bound
$
\|\Gamma g(y_n)-\Gamma g(x_n)\|\leq 
 \big{(}L_\e(g)+ \|\Gamma g\|_\infty\big{)}\|y_n-x_n\|,
$
 which is enough.

Part (c) is basically trivial, and (d) follows from (c). Let us check (e) for Lipschitz functions. Take $f,g\in\Lip(E)$, and assume that $|f|,|g|\leq \Delta_0$ and $L(f),L(g)\leq 1$.
Pick $x,y\in E$ such that $\|x\|\leq \|y\|$. One has
\begin{align*}
\frac{f(x)g(x)}{1+\|x\|}&-\frac{f(y)g(y)}{1+\|y\|}\\
&=
\underbrace{\frac{f(x)g(x)}{1+\|x\|}-
\frac{f(x)g(y)}{1+\|x\|}}_{(\star)}+
\underbrace{\frac{f(x)g(y)}{1+\|x\|}
-
\frac{f(x)g(y)}{1+\|y\|}}_{(\star\star)}
+
\underbrace{\frac{f(x)g(y)}{1+\|y\|}
-
\frac{f(y)g(y)}{1+\|y\|}}_{(\star\star\star)}
\end{align*}
The absolute value of the first and third summands cannot exceed $\|x-y\|$. As for the second one observe that for real $0<s<t$, the concavity of the function $s\mapsto 1/s$ and the mean value theorem imply $1/s-1/t\leq s^{-2}(t-s)$, so
$$
|(\star\star)|\leq \frac{|f(x)g(y)|}{(1+\|x\|)^2}|\|y\|-\|x\| |\leq \|x-y\|,
$$
which completes the proof for $\LE$. The corresponding statement about $U(E)$ can be proved analogously; see Lemma~\ref{lem:delta} for a more general result.
\end{proof}

The same argument shows that if $X$ is any metric space and we fix a point $z\in X$, then, for every $f,g\in \Lip(X)$, the function $fg/\Delta_z$
is Lipschitz. Thus, the lattice $\Lip(X)$ is a unital $f$-ring
under the product
$
M_z(f,g)={fg}/\Delta_z.
$
The resulting $f$-ring structures are essentially independent on $z$. Indeed if $y$ is another point of $X$, then 
$\Psi(f)=\Delta_y f/\Delta_z$ defines  a lattice automorphism of $\Lip(X)$
intertwinning $M_z$ and $M_y$ in the sense that
$M_y(\Psi f,\Psi g)=\Psi(M_z(f,g))$ for every $f,g\in\Lip(X)$.

It is worth remarking that, if $X$ is a metric space, then
the lattice $\Lip(X)$ is isomorphic to some $\Lip^*(Y)$, and actually $Y$ can be taken to be $X$, with a new metric which is bounded and uniformly equivalent to the original metric of $X$; see  \cite[Proposition 1.7.5]{wi} and \cite[Section~5]{l-t}.

\subsection{Peculiarities of the line}
Our main ``concrete'' result is the following.

\begin{theorem}\label{th:dim1}
The map $\Gamma:\Lip(\R)\To\Lip^*(\R)$ is a surjective isomorphism of vector lattices.
\end{theorem}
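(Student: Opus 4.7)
My plan is to lean on Proposition~\ref{prop:Gamma} for most of the structure. Parts (a) and (b) already give that $\Gamma$ is a linear map preserving the lattice operations pointwise on $C(\R)$, that it carries $\Lip(\R)$ into $\Lip^*(\R)$, and that it is injective with inverse $\Phi$ on all of $C(\R)$. The only thing left is surjectivity: I need to show that $\Phi$ sends $\Lip^*(\R)$ into $\Lip(\R)$, so that each bounded Lipschitz function $f$ on the line equals $\Gamma(\Phi f)$ for some genuinely Lipschitz $\Phi f$.

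For the estimate, I would unfold the definition to $\Phi f(t)=(1+|t|)f(\sig(t)\log(1+|t|))$. For $0\leq y\leq x$ I would substitute $u=\log(1+x)$ and $v=\log(1+y)$ and split
\[
\Phi f(x)-\Phi f(y)=(e^u-e^v)f(u)+e^v\bigl(f(u)-f(v)\bigr),
\]
obtaining
\[
|\Phi f(x)-\Phi f(y)|\leq \|f\|_\infty\,|x-y|+(1+y)L(f)(u-v).
\]
The pivotal ingredient is the elementary inequality $\log s\leq s-1$ for $s\geq 1$, applied to $s=(1+x)/(1+y)$: it yields $(1+y)(u-v)\leq x-y$, and the same-sign case closes with $L(\Phi f)\leq \|f\|_\infty+L(f)$. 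The negative/negative case is identical by symmetry.

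For the mixed case $x>0>y$ I would route through the origin using $\Phi f(0)=f(0)$: the triangle inequality combined with the same-sign bound applied separately on $[0,x]$ and $[y,0]$ gives $|\Phi f(x)-\Phi f(y)|\leq (\|f\|_\infty+L(f))(x+|y|)$, and this equals $(\|f\|_\infty+L(f))|x-y|$ exactly because $x$ and $y$ lie on opposite half-lines. This finishes the argument.

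The main point---rather than a genuine obstacle, since the estimate itself is short---is the delicate calibration in the central bound: the factor $e^v=1+y$ that naively blows up with $y$ is precisely compensated by the gain $(1+y)(u-v)\leq x-y$. This cancellation is peculiar to dimension one; in $\R^n$ with $n\geq 2$ the radial homothety $\gamma$ magnifies displacements transverse to a ray, leaving no room for an analogous estimate, which is consistent with the authors' remark that the theorem does not extend to higher dimensions.
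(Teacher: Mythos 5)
Your proposal is correct and follows essentially the same route as the paper: reduce to showing $\Phi$ maps $\Lip^*(\R)$ into $\Lip(\R)$, use the same two-term decomposition of $\Phi f(x)-\Phi f(y)$, and close with the inequality $(1+y)\bigl(\log(1+x)-\log(1+y)\bigr)\leq x-y$, which is exactly the paper's concavity/mean-value estimate in the form $\log s\leq s-1$. Your explicit treatment of the mixed-sign case via the origin is a small addition the paper leaves implicit, but the argument is the same.
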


\begin{proof}
We already know that $\Gamma$ takes Lipschitz functions into bounded Lipschitz functions. The point is to prove that it is onto when the underlying metric space is $\R$.
As $\Phi$ is the inverse of $\Gamma$ in $C(\R)$ one only has to prove that $\Phi$ takes bounded Lipschitz functions into Lipschitz functions.

Pick $f\in \Lip^*(\R)$ and set $g=\Phi f$, that is,
$
g(t)=(1+|t|)f\left( \sig(t)\log(1+|t|)	\right).
$
It suffices to see that $g$ is Lipschitz on $[0,\infty)$ and $(-\infty,0]$. Let us check the Lipschitz condition on the ``positive'' half-line.
Take $0\leq s<t$. The function $s\mapsto \log(1+s)$ is concave, so by the mean value theorem
$$
\log(1+t)- \log(1+s)\leq (t-s)/(1+s).
$$
Now,
\begin{align*}
|g(t)&- g(s)|=| (1+t)f\left( \log(1+t)	\right)- (1+s)f\left( \log(1+s)	\right)|\\
&\leq | (1+t)- (1+s)||f\left( \log(1+t)	\right)|+ (1+s)|f\left( \log(1+t)	\right)-f\left( \log(1+s)	\right)|\\
&\leq \|f\|_\infty|t-s|+ (1+s)L(f)(t-s)/(1+s)\\
&\leq \|f\|_{\Lip^*}|t-s|,
\end{align*}
which completes the proof.
\end{proof}

Theorem~\ref{th:dim1} and Proposition~\ref{prop:Gamma} together confirm our initial impression that $U(\R)$ and $U^*(\R)$ are very similar objects and indeed the homomorphism $\Gamma: U(\R) \To U^*(\R)$
is very close to being 
an isomorphism as its range is a unital subring of $U^*(\R)$ which is moreover uniformly dense since it contains all bounded Lipschitz functions.

\subsection{No isomorphism in higher dimensions}
Theorem~\ref{th:dim1} does not generalize to higher dimensions:
easy computations in polar coordinates reveal that there exist $f\in\Lip^*(\R^2)$ for which  $\Phi f$ fails to be Lipschitz.
Thus $\Gamma$ cannot provide an isomorphism between $\Lip(\R^2)$ and $\Lip^*(\R^2)$.
Actually, these lattices are not isomorphic, as we have the following result where we do not assume that the spaces are finite dimensional.

\begin{proposition}\label{prop:sharp} Let $X$ and $Y$ be normed spaces. If $X$ has dimension greater that 1, then the vector lattices $\Lip^*(X)$ and $\Lip(Y)$ are not isomorphic.
\end{proposition}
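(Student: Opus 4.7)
The plan is to assume for contradiction that $T\colon \Lip^*(X)\to\Lip(Y)$ is a vector-lattice isomorphism and derive a contradiction, proceeding in three steps and focusing on the finite-dimensional case (the infinite-dimensional one requiring a separate reduction).

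\emph{Step 1 (representation).} The constant $\mathbf 1_X$ is a strong order unit of $\Lip^*(X)$, so $u:=T(\mathbf 1_X)$ is one of $\Lip(Y)$; since $\Delta_z=1+d(\cdot,z)$ is also a strong unit, one obtains $c_1\Delta_z\le u\le c_2\Delta_z$. Order-continuity of a real lattice homomorphism is a lattice-intrinsic notion, and on both $\Lip^*(X)$ and $\Lip(Y)$ the order-continuous ones are exactly the positive multiples of point-evaluations (ultrafilter-at-infinity limits, by contrast, fail to vanish on decreasing sequences of shell-like bump functions). Transporting them through $T$ therefore yields a bijection $\sigma\colon X\to Y$ which is a homeomorphism of the norm topologies (so $\dim X=\dim Y$) together with the representation $T(f)(y)=u(y)\,f(\sigma^{-1}(y))$.

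\emph{Step 2 (decay of $\sigma^{-1}$).} Positivity of $T$ gives $|T(f_n)-T(f)|=T(|f_n-f|)\le\|f_n-f\|_\infty\,u$, so the graph of $T$ is closed in the product Banach topology of $(\Lip^*(X),\|\cdot\|_{\Lip^*})$ and $(\Lip(Y),\|\cdot\|_{\Lip})$, and the closed graph theorem gives $\|T(f)\|_{\Lip(Y)}\le K\|f\|_{\Lip^*(X)}$. I would then test this on the capped distance functions $f_a(x)=d(x,a)\wedge 1$, whose Lipschitz norms are at most $2$ uniformly in $a$. Since $T(f_a)(y_0)=0$ at $y_0=\sigma(a)$, the Lipschitz estimate for $T(f_a)$ at $y_0$ and $y_0+h$ produces $u(y_0+h)\,\|\sigma^{-1}(y_0+h)-\sigma^{-1}(y_0)\|\le 2K\|h\|$, and since $u\asymp\Delta_z$, $\sigma^{-1}$ has pointwise local Lipschitz constant at $y$ bounded by $C/(1+\|y-z\|)$.

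\emph{Step 3 (contradiction for $\dim>1$).} With $X$ finite-dimensional of dimension $n\ge 2$, the sphere $S_R=\{y\in Y:\|y-z\|=R\}$ is arcwise connected with any two of its points joined inside $S_R$ by an arc of length at most $\pi R$; integrating the decay of $\sigma^{-1}$ along such an arc yields $\diam_X\sigma^{-1}(S_R)\le\pi C$ uniformly in $R$. Since $\sigma^{-1}(S_R)$ is a topological $(n-1)$-sphere of bounded diameter, Jordan--Brouwer splits $X$ into two open components; the bounded one coincides with $\sigma^{-1}(B_R)$ (bounded because in finite dimension $\sigma^{-1}$ sends compacts to compacts), contains the fixed point $\sigma^{-1}(0)$, and lies in a ball of radius $\pi C$ around it. Letting $R\to\infty$ gives $X=\bigcup_R\sigma^{-1}(B_R)$ contained in that ball, contradicting that $X$ is unbounded. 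The infinite-dimensional case is reduced to the finite-dimensional one by restricting to a closed two-dimensional subspace of $X$ and using McShane extension to transfer the incompatibility.

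The hardest step will be Step~2: the pointwise decay of $\sigma^{-1}$ has to be squeezed out of the requirement that $u\cdot(f\circ\sigma^{-1})$ be Lipschitz for \emph{every} $f\in\Lip^*(X)$, even though products of Lipschitz functions are not Lipschitz in general. The dimension hypothesis enters transparently only in Step~3 via the arcwise connectedness of $S_R$: in dimension one the zero-sphere has no arc joining its two points inside $S_R$ and the diameter estimate collapses, consistent with (and making room for) Theorem~\ref{th:dim1}.
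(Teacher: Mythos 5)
Your Steps 2 and 3 are essentially sound in finite dimensions, and Step 3 is in fact a genuinely different endgame from the paper's: the paper finds two points $a,b$ on a large sphere of $Y$ whose images under the homeomorphism point in opposite directions (hence are far apart in $X$), joins them by a Goł\c ab polygonal path and sums the weighted Lipschitz estimate along it; you instead bound $\diam \sigma^{-1}(S_R)$ uniformly in $R$, trap the bounded complementary component $\sigma^{-1}(B_R)$ inside the convex hull of $\sigma^{-1}(S_R)$, and exhaust $X$ by a bounded set. Both work when everything is finite dimensional, and your Step 2 estimate is precisely the paper's ``crucial'' inequality, obtained the same way (closed graph plus capped distance functions); just note that the estimate is only valid when the two image points are at distance $\le 1$, so the integration along arcs needs the subdivision-by-uniform-continuity step that the paper carries out explicitly.

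There are, however, two genuine gaps. First, Step 1 rests on a false characterization: point evaluations are \emph{not} order continuous on Lipschitz lattices. For instance $f_n(x)=(1-n|x|)^+$ is a decreasing sequence in $\Lip^*(\R)$ whose only Lipschitz minorant below all terms is $0$, so $f_n\downarrow 0$ in the lattice sense, yet $\delta_0(f_n)=1$ for all $n$. Thus ``order-continuous homomorphism'' does not single out the evaluations, and your intrinsic transport of them through $T$ collapses. The representation $T f(y)=u(y)f(\sigma^{-1}(y))$ is nevertheless true, but it has to be obtained differently — the paper cites Leung–Tang (Theorem 4.3 and Proposition 4.4) or, alternatively, characterizes evaluations as the homomorphisms with a countable neighbourhood base in $H(\mathscr L)$, à la Garrido–Jaramillo. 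Second, the infinite-dimensional case is not a one-line reduction. The image of a two-dimensional subspace under $\sigma$ is only a topological surface in $Y$, not a subspace, so neither the spheres nor the separation argument survive restriction, and McShane extension does not turn the restricted configuration back into a lattice isomorphism of the required type. Worse, your exhaustion $X=\bigcup_R\sigma^{-1}(B_R)\subseteq \overline B(\sigma^{-1}(z),\pi C)$ tacitly uses that $\sigma^{-1}$ carries bounded sets to bounded sets, which you justify by local compactness; for infinite-dimensional spaces this genuinely fails for Lipschitz homeomorphisms in general (there is a Lipschitz homeomorphism of $c_0$ sending a line into the unit ball), and the paper needs a separate, nontrivial argument — the oscillating radial function on dyadic spheres in the ``missing piece'' subsection — to recover it from the lattice isomorphism. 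This is exactly why the published endgame (the referee's ray argument) avoids all finite-dimensional topology.
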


\begin{proof}
Assume $\Psi: \Lip^*(X)\To\Lip(Y)$ is an isomorphism, where $X$ and $Y$ are normed spaces that we can assume complete since the lattice of  Lipschitz (or bounded Lipschitz) functions on a given metric space is naturally isomorphic to that of its completion.
Then $\Psi$ must have the form
$$
\Psi f (y)=u(y) f(\psi(x))\quad\quad(f\in \Lip^*(X), y\in Y),
$$
where:
\begin{itemize}
\item[(a)] $\psi: Y\To X$ is a homeomorphism.
\item[(b)] $u\in\Lip(Y)$, with $c\Delta_0\leq u\leq C\Delta_0$ for some $c,C>0$.
\item[(c)] The operator $\Psi$ is bounded when $\Lip^*(X)$ is normed with $f\mapsto \|f\|_\infty+ L(f)$ and $\Lip(Y)$ with $g\mapsto |g(0)|+L(g)$.

\item[(d)] $\psi$ is a Lipschitz map.
\item[(e)] The map $\psi^{-1}:X\To Y$ takes bounded sets of $X$ into bounded sets of $Y$ or, equivalently, $\|\psi(y)\|\To\infty$ as $\|y\|\To\infty$.
\end{itemize}

Let us justify this hotchpotch. The functional representation of $\Psi$ and (a) follow, for instance, from \cite[Theorem 4.3 and Proposition 4.4]{l-t}. Another possibility is to observe that if $\mathscr L$ is either $\Lip^*(X)$ or $\Lip(X)$, then the only nonzero lattice homomorphisms $\varphi: \mathscr L\To\R$ that have a countable neighborhood base in $H(\mathscr L)$ are those of the form $\varphi(f)=cf(x)$ for some fixed $c>0$ and $x\in X$. This can be proved as \cite[Lemma~3.3]{g-j-MfM}. 

The second item is obvious: $u=\Psi(1_{X})$ is Lipschitz which gives the upper bound. The lower one follows from the fact that $u$ has to be dominant in $\Lip(Y)$ since $1_X$ is dominant in $\Lip^*(X)$.
Multiplying $\Psi$ by $c^{-1}$ we may and do assume that $c\geq 1$ in the rest of the proof.

The third one easily follows from the closed graph theorem as both $\Lip^*(X)$ and $\Lip(Y)$ are complete in the given norms, and quite clearly $\Psi$ has closed graph.

In the remainder of the proof we set $M=\|\Psi\|$.

To prove that $\psi$ is Lipschitz, take $y,y'\in Y$ and let us  first assume $\|\psi(y)-\psi(y')\|\leq 1$. 
Set $x=\psi(y),
x'=\psi(y')$ and take a nonnegative $f\in\Lip^*(X)$ such that $f(x)=0, f(x')=\|x-x'\|, \|f\|_\infty=\|x-x'\|, L(f)=1$. In particular $\|f\|_{\Lip^*(X)}\leq 2$. 
If $g=\Psi(f)$, then 
$$\|g\|_{\Lip^*(Y)}\leq 2M, \quad g(y)=0,\quad g(y')=u(y')\|x-x'\|\geq (1+\|y'\|)\|x-x'\|,
$$
hence
\begin{equation}\label{crucial}
{({1+\|y'\|})\|\psi(y')-\psi(y)\|}\leq 2M\|y'-y\|\quad\quad(\text{provided } \|\psi(y)-\psi(y')\|\leq 1).
\end{equation}
If $\|\psi(y)-\psi(y')\|>1$, take $n\in\N$ and divide the segment $[y,y']$ using the points
$$
y_i=y+\frac{i}{n}(y'-y)\quad\quad(0\leq i\leq n).
$$
Then $\|y_{i+1}-y_i\|=\|y'-y\|/n$ and since $\psi$ is uniformly continuous on $[y,y']$ we have $\|\psi(y_{i+1})-\psi(y_i)\|\leq 1$ for sufficiently large $n$. Hence,
$$
\|\psi(y')-\psi(y)\|\leq \sum_{1\leq i\leq n}\|\psi(y_{i+1})-\psi(y_i)\|\leq 2M  \sum_{1\leq i\leq n}\|y_{i+1}-y_i\|=2M\|y'-y\|
$$
and $\psi$ is globally Lipschitz, with Lipschitz constant $2M$.

Part (e) is obvious for any homeomorphism $\psi:Y\To X$ if $Y$ (hence $X$) is finite dimensional, using local compactness. For infinite dimensional spaces the argument is a bit trickier and is deferred to the end of this Section. 

\smallskip

We will reach a contradiction using a gorgeous argument kindly provided by the referee. (Our original approach, based on the non contractibility of spheres in finite dimensional spaces, does not apply to the infinite dimensional ones, cf. \cite{b-s}.)
First of all, applying a translation if necessary, we may assume and do that $\psi(0)=0$.
Pick any real (large) number $L>0$ and be $R$ sufficiently large to guarantee that $\|\psi(y)\|>L$ for $\|y\|\geq R$.
Set
$$
S=\{y: \|y\|=2R\},\quad  U=\{y: \|y\|<2R\},\quad V=\{y: \|y\|>2R\}
$$
Now, pick any $a\in S$ and consider the ray $\{-t\psi(a): t\geq 0\}$. Then $-t\psi(a)\in \psi[U]$ for $t$ small, while  $-t\psi(a)\in \psi[V]$ for $t$ large enough and since $\psi[S]=\partial\:\psi[U]=\partial\:\psi[V]$ we conclude that for some $s>0$ one has $-s\psi(a)\in\psi[S]$. Fix this $s$ and take $b\in S$ such that $\psi(b)=-s\psi(a)$.

As the points $\psi(a)$ and $\psi(b)$ lie on opposite directions and $\|a\|=\|b\|=2R>R$ we have that $\|\psi(a)\|, \|\psi(b)\| >L$, hence $\|\psi(a)-\psi(b)\| >2L$.

Let $P$ be a two-dimensional subspace of $Y$ containing the points $a$ and $b$. The Banach-Mazur distance between $P$ and the Euclidean plane is bounded by $\sqrt{2}$ and so there is an absolute constant $C_0<\infty$ and points $(y_i)_{0\leq i\leq n}$ in $S$ such that:
\begin{itemize}
\item $a=y_0, b=y_n$.
\item The polygonal path $\bigcup_{1\leq i\leq n} [y_{i-1}, y_i]$ does not meet the ball of radius $R$.
\item $\sum_{1\leq i\leq n} \|y_{i-1}-y_i\|\leq C_0R$.
\end{itemize}
Actually $C_0=4$ suffices by  a  classical result of Go\l\c ab \cite{golab} that the reader can consult in \cite[Theorem 4I, p 29]{schaeffer}.
Since the set $\bigcup_{1\leq i\leq n} [y_{i-1}, y_i]$ is compact and $\psi$ is uniformly continuous on it, by inserting extra points in each segment if necessary, we may
also assume that  $\|\psi(y_{i-1})-\phi(y_i)\|\leq 1$ for $1\leq i\leq n$. Now, we can use (\ref{crucial}) to get
$$
2L\leq \|\psi(a)-\psi(b)\|\leq \sum_{1\leq i\leq n} \|\psi(y_{i-1})-\psi(y_i)\| 
\leq 2M \sum_{1\leq i\leq n} \frac{\|y_{i-1}-y_i\|}{1+\|y_i\|}\leq \frac{8 M R}{1+R},
$$
which is a contradiction for sufficiently large $L$.
\end{proof}

\subsection{The missing piece}
This Section contains the proof of the statement (e) upholding the proof of the preceding Proposition for general (infinite dimensional) Banach spaces where local compactness cannot be invoked.

By the way, if $X=c_0$, one can construct a homeomorphism $\phi:X\To X$ which is moreover a Lipschitz map, and such that $\phi$ maps a line of $X$ into the unit ball of $X$, so $\phi^{-1}$ clearly maps the unit ball into an unbounded set. So one cannot expect to derive ``boundedness'' of the inverse from any ``general'' result.

Throughout the  Section we denote by $rB_X$ and $rS_X$ the closed ball of radius $r$, centered at the origin of $X$, and the corresponding sphere, respectively; and similarly for $Y$.

\begin{lemma}\label{lem:aswell}
Let $\Psi: \Lip^*(X)\To\Lip(Y)$ be an isomorphism of vector lattices. Suppose that $\Psi f(y)=u(y)f(\psi(y))$ for every $f\in\LX$. Then the map $\Psi_0: \Lip^*(X)\To\Lip(Y)$ defined by $\Psi_0 f(y)=(1+\|y\|)f(\psi(y))$ is an isomorphism as well.
\end{lemma}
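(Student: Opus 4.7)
The plan is to realize $\Psi_0$ as the composition $\Psi_0 = M_m\circ\Psi$, where $M_m$ denotes multiplication by the strictly positive function $m := \Delta_0/u$ on $Y$. Since $\Psi$ is already a lattice isomorphism $\Lip^*(X)\to\Lip(Y)$ and $m>0$ pointwise (so any such multiplier automatically preserves suprema and infima), the whole problem reduces to showing that $M_m$ is a linear bijection of $\Lip(Y)$ onto itself.

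First I would use item (b) from the proof of Proposition~\ref{prop:sharp}---namely $c\Delta_0\le u\le C\Delta_0$ for some $c,C>0$---to conclude that $1/C\le m\le 1/c$, so both $m$ and $1/m$ are bounded between positive constants. A direct computation starting from
$$
m(y)-m(y')=\frac{\Delta_0(y)\bigl(u(y')-u(y)\bigr)+u(y)\bigl(\Delta_0(y)-\Delta_0(y')\bigr)}{u(y)u(y')},
$$
combined with $L(\Delta_0)=1$, $L(u)<\infty$, and $u(y)u(y')\ge c^2\Delta_0(y)\Delta_0(y')$, yields not only $L(m)<\infty$ but the sharper quantitative decay
$$
|m(y)-m(y')|\le K\,\frac{\|y-y'\|}{\Delta_0(y')}
$$
for some constant $K$; the symmetric calculation gives the analogous statement for $1/m$.

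The main step, and where I expect the only subtle point, is showing that $M_m$ sends $\Lip(Y)$ into itself. The naive ``bounded Lipschitz times Lipschitz'' argument fails because $g\in\Lip(Y)$ need not be bounded. What saves the day is that every $g\in\Lip(Y)$ satisfies $|g(y')|\le(|g(0)|+L(g))\Delta_0(y')$, which precisely matches the decay of $|m(y)-m(y')|$: in the usual splitting
$$
|m(y)g(y)-m(y')g(y')|\le\|m\|_\infty L(g)\|y-y'\|+|g(y')|\cdot|m(y)-m(y')|,
$$
the problematic factor $|g(y')|$ is absorbed by the $\Delta_0(y')$ in the denominator, producing a finite global Lipschitz constant for $mg$. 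Running the same argument with $1/m$ in place of $m$ exhibits $M_{1/m}$ as a two-sided inverse of $M_m$ on $\Lip(Y)$, so $M_m$ is a lattice automorphism of $\Lip(Y)$ and $\Psi_0=M_m\circ\Psi$ is the required isomorphism.
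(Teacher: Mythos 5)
Your proof is correct, but it takes a genuinely different route from the paper's. You correct $\Psi$ on the codomain side, writing $\Psi_0=M_m\circ\Psi$ with $m=\Delta_0/u$ on $Y$, and the whole burden falls on showing that multiplication by $m$ is an automorphism of $\Lip(Y)$; since $\Lip(Y)$ contains unbounded functions, this forces the weighted estimate $|m(y)-m(y')|\leq K\|y-y'\|/\Delta_0(y')$, which you correctly pair with the growth bound $|g(y')|\leq(|g(0)|+L(g))\Delta_0(y')$. The paper instead corrects on the domain side: since $\Psi$ is onto and $\Delta_0\in\LY$, one can pick $u_0\in\LX$ with $\Psi(u_0)=\Delta_0$; the bounds $c\Delta_0\leq u\leq C\Delta_0$ translate into $c\leq u_0\leq C$, so $u_0$ and $1/u_0$ are \emph{bounded} Lipschitz functions and $f\mapsto u_0f$ is trivially an automorphism of $\LX$ (no estimate needed, as $\LX$ is a ring). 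Then $\Psi_0=\Psi\circ M_{u_0}$, because $\Psi(u_0f)(y)=u(y)u_0(\psi(y))f(\psi(y))=(\Psi u_0)(y)f(\psi(y))=\Delta_0(y)f(\psi(y))$. The paper's argument is shorter because it exploits surjectivity of $\Psi$ to move the multiplier to the bounded side; yours is self-contained on $Y$ and in passing establishes the slightly more general fact that for any Lipschitz $u$ with $c\Delta_0\leq u\leq C\Delta_0$, multiplication by $\Delta_0/u$ is a lattice automorphism of $\Lip(Y)$ --- a statement in the same spirit as the paper's earlier remark that $f\mapsto\Delta_yf/\Delta_z$ is an automorphism of $\Lip(X)$ intertwining the products $M_z$ and $M_y$. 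Both proofs are complete and correct.
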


\begin{proof}
If $\Delta_0(y)=1+\|y\|$, then we know that $c\Delta_0\leq u\leq C\Delta_0$ for some $C,c>0$. We have $u=\Psi(1_X)$ and since $\Delta_0\in \LY$ there is $u_0\in\LX$ such that $\Delta_0=\Psi(u_0)$. Clearly, $c\leq u_0\leq C$, so $1/u_0$ is Lipschitz on $X$ and multiplication by $u_0$ defines an automorphism of $\LX$. 
Thus, the map $\Psi_0: \Lip^*(X)\To\Lip(Y)$ defined by
$\Psi_0(f)=\Psi(u_0 f)$
 is a lattice  isomorphism. But
$$\Psi_0 f(y)=u(y)u_0(\psi(y))f(\psi(y))=(\Psi u_0)(y)f(\psi(y))=\Delta_0(y)f(\psi(y)),
$$
 which is enough.
\end{proof}

\begin{lemma}Let $\psi:Y\To X$ be a homeomorphism. Suppose that for some $u\in\LY$ the map $\Psi:\Lip^*(X)\to\Lip(Y)$ defined by $\Psi f(y)=u(y)f(\psi(y))$ is an isomorphism. Then $\psi^{-1}:X\to Y$ maps bounded sets into bounded sets. 
\end{lemma}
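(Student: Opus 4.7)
By Lemma \ref{lem:aswell}, the map $\Psi_0 f(y)=(1+\|y\|)f(\psi(y))$ is also a lattice isomorphism $\Lip^*(X)\to\Lip(Y)$, so we may and do assume $u=\Delta_0$. Suppose, toward contradiction, that $\psi^{-1}$ does not map bounded sets to bounded sets. Then there exist $K>0$ and a sequence $(y_n)\subset Y$ with $\|y_n\|\to\infty$ while $\psi(y_n)\in KB_X$. Write $M=\|\Psi\|$.

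The first step is to extract a quantitative \emph{contractivity} of $\psi$ near each $y_n$ from the norm bound on $\Psi$. For a fixed $n$ and $r>0$, consider the bump $f_{n,r}(x)=\max(0,1-\|x-\psi(y_n)\|/r)\in\LX$ with $\|f_{n,r}\|_{\Lip^*(X)}\le 1+1/r$. Since $\Psi f_{n,r}(y_n)=1+\|y_n\|$ and $\Psi f_{n,r}(y)=0$ whenever $\psi(y)\notin\overline{B(\psi(y_n),r)}$, the inequality $L(\Psi f_{n,r})\le M(1+1/r)$ yields
\[
\psi\bigl(B(y_n,R_n(r))\bigr)\subset \overline{B(\psi(y_n),r)},
\qquad R_n(r):=\frac{r(1+\|y_n\|)}{M(r+1)}.
\]
In particular, taking $r=1$, the set $\psi^{-1}((K+1)B_X)$ contains the ball $B(y_n,R_n(1))$, whose radius tends to $\infty$.

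The second step is a dichotomy on the bounded set $\{\psi(y_n)\}\subset X$: after passing to a subsequence, either $(\psi(y_n))$ is Cauchy or it is $c$-separated for some $c>0$ (every bounded set in a Banach space that is not relatively compact contains a uniformly separated sequence). In the Cauchy case, completeness of $X$ gives $\psi(y_n)\to x^*$, and continuity of $\psi^{-1}$ then forces $y_n=\psi^{-1}(\psi(y_n))\to\psi^{-1}(x^*)$, contradicting $\|y_n\|\to\infty$. So we are reduced to the case $\|\psi(y_n)-\psi(y_m)\|\ge c$ for $n\ne m$.

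The third step handles the separated case by producing a bounded Lipschitz function on $X$ whose image under $\Psi$ cannot be Lipschitz. Using disjoint bumps of radius $c/3$ centered at the points $\psi(y_n)$, define
\[
f(x)=\sum_{n}(-1)^{n}\max\!\bigl(0,\,1-3\|x-\psi(y_n)\|/c\bigr).
\]
Then $\|f\|_\infty\le 1$ and $L(f)\le 3/c$, so $L(\Psi f)\le M(1+3/c)=:L_0$; moreover $\Psi f(y_n)=(-1)^{n}(1+\|y_n\|)$. For any $n,m$ of opposite parity, the Lipschitz estimate on $\Psi f$ gives
\[
2+\|y_n\|+\|y_m\|=|\Psi f(y_n)-\Psi f(y_m)|\le L_0\,\|y_n-y_m\|.
\]
The plan is to combine this with the contractivity of step one to reach a contradiction: the contractivity shows that $\psi^{-1}((K+1)B_X)$ contains large balls $B(y_n,R_n(1))$ with $R_n(1)\sim\|y_n\|/(2M)$, and by openness and connectedness of $\psi^{-1}((K+1)B_X)$ one can interpolate along a path inside it to produce auxiliary points $y_n'$ close to $y_n$ (at bounded distance) but still with $\psi(y_n')\in(K+1)B_X$. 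Replacing $(y_n)$ by such an interpolated sequence (and relabelling parities accordingly), one forces indices of opposite parity at bounded mutual $Y$-distance while $\|y_n\|,\|y_m\|\to\infty$, so that the displayed estimate becomes $\|y_n-y_m\|\ge (\|y_n\|+\|y_m\|)/L_0\to\infty$, contradicting boundedness of $\|y_n-y_m\|$.

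I expect the main obstacle to be precisely this interpolation step in step three: one must exhibit, inside the unbounded open set $\psi^{-1}((K+1)B_X)$, a sequence $(z_k)$ with $\|z_k\|\to\infty$, $\|z_{k+1}-z_k\|$ bounded, and with images $\{\psi(z_k)\}$ still enjoying a separation property strong enough that the bump construction applies. Producing this sequence requires combining the path-connectedness of $\psi^{-1}((K+1)B_X)$ with the contractivity in a controlled way; the routine parts are the reductions in steps one and two and the contradiction in the Cauchy case.
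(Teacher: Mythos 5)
Your reduction to $u=\Delta_0$ via the preceding lemma, the bump-function ``contractivity'' estimate of step one, and the Cauchy-versus-separated dichotomy of step two are all correct (and the Cauchy branch does dispose of that case). But the proof is not complete: the separated case, which is the whole substance of the lemma, ends with a plan rather than an argument, as you yourself acknowledge. The alternating-bump inequality $2+\|y_n\|+\|y_m\|\le L_0\|y_n-y_m\|$ for opposite parities is by itself perfectly consistent with $\psi^{-1}$ failing to map bounded sets to bounded sets --- the $y_n$ may simply be very far apart in $Y$ --- so everything hinges on the missing interpolation. Worse, the interpolation you describe appears to be self-defeating: your own step one shows that for any fixed $r>0$ and $D>0$, once $\|y\|$ is large and $\psi(y)$ lies in a fixed bounded set, every $z$ with $\|z-y\|\le D$ satisfies $\|\psi(z)-\psi(y)\|\le r$, because the radius $r(1+\|y\|)/(M(r+1))$ in your estimate eventually exceeds $D$. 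Hence opposite-parity points at bounded mutual $Y$-distance, with large norms and images in $(K+1)B_X$, necessarily have images at distance tending to $0$; they cannot carry disjoint bumps of radius $c/3$, so the relabelled sequence loses exactly the $c$-separation that the construction of $f$ requires. I do not see how to repair step three along these lines.

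The paper avoids sequences altogether. After the same reduction to $u=\Delta_0$ (and $\psi(0)=0$), it takes a single radial function $g(y)=h(\|y\|)$ on $Y$ that vanishes on the spheres of radius $2^k$ for even $k$ and agrees with $\Delta_0$ on those for odd $k$; then $f=\Psi^{-1}g\in\Lip^*(X)$ alternates between $0$ and $1$ on the sets $\psi[2^kS_Y]$, so consecutive sphere-images in $X$ are at distance at least $r=1/L(f)$. Using that $\d(A,B)=\d(\partial A,\partial B)$ for disjoint subsets of the connected space $X$, an easy induction yields $krB_X\subset\psi[2^kB_Y]$, which is precisely the desired boundedness of $\psi^{-1}$ on bounded sets. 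If you want to salvage your approach, the lesson is that the witness function should be built globally on $Y$, where the spheres provide the alternating structure, and pulled back through $\Psi^{-1}$, rather than assembled on $X$ from a sequence of image points whose mutual geometry you cannot control.
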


\begin{proof}
We may assume that $\psi(0)=0$, using a translation if necessary, and then that 
$u=\Delta_0$, according to the preceding Lemma. Hence, 
$$\Psi f(y)=(1+\|y\|)f(\psi(y))\quad\mathrm{and}\quad 
\Psi^{-1}g(x)=\frac{g(\psi^{-1}(x))}{1+\|\psi^{-1}(x)\|}$$
for $f\in\Lip^*(X), g\in\Lip(Y)$.

We define a ``special'' Lipschitz function on $Y$ as follows. We take any Lipschitz $h:\H\To\H$ such that
$$
h(t)=\begin{cases}
0 & \text{if $t=0$ or $t=2^k$ for even $k$;}\\
1+2^k & \text{if $t=2^k$ for odd $k$.}
\end{cases}
$$
(A piecewise linear one would suffice.) Now, put $g(y)=h(\|y\|)$. Then $g$ is Lipschitz, agrees with $\Delta_0$ on the spheres of radii $2^k$ when $k$ is odd and vanishes on every sphere of radius $2^k$ for even $k$ and at the origin. If $f=\Psi^{-1}g$, then $f$ is Lipschitz and bounded, vanishes on $\psi[2^k S_Y]$ for even $k$ and the origin and $f=1$ on $\psi[2^k S_Y]$ for odd $k$. Thus, if $r=1/L(f)$, then 
$\d(0,\psi[S_Y])\geq r$ and $\d(\psi[2^{k}S_Y],\psi[2^{k+1}S_Y])\geq r$ for $k\in \N_0$

Let us see that $krB_X\subset \psi[2^kB_Y]$ by induction  on $k\in\N_0$, thus ending the proof. The initial step ($k=0$) is obvious; to check the inductive step notice that as $X$ is connected, if $A, B$ are disjoint subsets of $X$, then $\d(A,B)=
\d(\partial A,\partial B)$. Now assume that $krB_X\subset \psi[2^kB_Y]$. We have
\begin{align*}
\d&(\psi[2^{k+1}B_Y]^c, \psi[2^kB_Y])\\
&= \d\big{(}\partial (\psi[2^{k+1}B_Y]^c),\partial (\psi[2^{k}B_Y])\big{)}=
\d(\psi[2^{k}S_Y],\psi[2^{k+1}S_Y])\geq r,
\end{align*}
hence $\d(krB_X, \psi[2^{k+1}B_Y]^c)\geq r$, which implies that 
$(k+1)rB_X\subset \psi[2^{k+1}B_Y]$ so we are done. 
\end{proof}

\section{Ring structures on $U(X)$}

In this final Section  we study $f$-ring structures on the lattice $U(X)$ when $X$ is a metric space.
We believed, for some time, that a possible reason preventing the lattices $U$ and $U^*$ from being isomorphic was that the former should not admit a unital $f$-ring structure. However, it is an obvious consequence of Proposition~\ref{prop:Gamma} that, when $X$ is a normed space, the product $f,g\longmapsto fg/\Delta_0$ makes $U(X)$ into an $f$-ring whose unit is precisely $\Delta_0$. 

\subsection{The secret life of $U(X)$ as a unital $f$-ring}
The following result implies that the same happens for a large classe of metric spaces $X$, including length spaces and quasiconvex spaces (cf. \cite{ga-ja}):

\begin{lemma}\label{lem:delta}
Let $X$ be a metric space and suppose there is a uniformly continuous function $\Delta:X\To(0,\infty)$ such that for each $f\in U(X)$ the ratio $|f|/\Delta$ is bounded. Then, given $f,g\in U(X)$, the function $fg/\Delta$ is uniformly continuous and the multiplication $M:U(X)\times U(X)\To U(X)$ defined as $M(f,g)=fg/\Delta$ makes $U(X)$ into an $f$-ring with unit $\Delta$.
\end{lemma}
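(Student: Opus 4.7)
The plan hinges on first producing a positive lower bound for $\Delta$. Applying the hypothesis to the constant function $1_X\in U(X)$ shows that $1/\Delta$ is bounded, so $\Delta\geq c$ for some $c>0$. This is decisive: it forces $1/\Delta$ to be uniformly continuous itself, and it also guarantees that the ratio $\Delta(y)/\Delta(x)\leq 1+\omega_\Delta(d(x,y))/c$ stays close to $1$ whenever $d(x,y)$ is small, where $\omega_\Delta$ denotes the modulus of continuity of $\Delta$. This lower bound is the essential tool that replaces the pointwise estimate $\Delta_0\geq 1$ used in Proposition~\ref{prop:Gamma}(e).

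To show $fg/\Delta\in U(X)$ for $f,g\in U(X)$, I would imitate the three-term splitting of Proposition~\ref{prop:Gamma}(e):
\begin{align*}
\frac{f(x)g(x)}{\Delta(x)}-\frac{f(y)g(y)}{\Delta(y)}
&=\frac{g(x)(f(x)-f(y))}{\Delta(x)}+\frac{f(y)(g(x)-g(y))}{\Delta(x)}\\
&\quad+\frac{f(y)g(y)(\Delta(y)-\Delta(x))}{\Delta(x)\Delta(y)}.
\end{align*}
Writing $M_f=\sup_X|f|/\Delta$ and $M_g=\sup_X|g|/\Delta$ (finite by hypothesis), and letting $\omega_f,\omega_g$ be the moduli of continuity of $f,g$, the first summand is immediately bounded by $M_g\,\omega_f(d(x,y))$. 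For the other two the trick is to convert $|f(y)|/\Delta(x)$ into $(|f(y)|/\Delta(y))\cdot(\Delta(y)/\Delta(x))$, bounded by $M_f(1+\omega_\Delta(d(x,y))/c)$, and similarly $|f(y)g(y)|/(\Delta(x)\Delta(y))\leq M_fM_g(1+\omega_\Delta(d(x,y))/c)$. Restricting to $d(x,y)$ small enough that $\omega_\Delta(d(x,y))\leq c$ yields an overall bound of the form $M_g\omega_f(d(x,y))+2M_f\omega_g(d(x,y))+2M_fM_g\omega_\Delta(d(x,y))$, which vanishes as $d(x,y)\to 0$ and establishes uniform continuity.

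The algebraic verification is routine: bilinearity, commutativity, and associativity of $M$ are pointwise identities that hold because $\Delta>0$, while $M(f,\Delta)=f$ identifies $\Delta$ as the multiplicative unit. The $f$-ring condition is checked through the equivalent form $|M(f,g)|=M(|f|,|g|)$ recorded in the introduction, which is immediate here from $|fg/\Delta|=|f||g|/\Delta$. The main obstacle I anticipate lies in controlling $|f(y)|/\Delta(x)$ inside the splitting: since $f$ need not be bounded on $X$, one cannot argue termwise as in the bounded Lipschitz case, and the hypothesis that $|f|/\Delta$ is bounded must be combined with the lower bound $\Delta\geq c$ to transfer from $\Delta(y)$ to $\Delta(x)$ at a uniformly bounded cost.
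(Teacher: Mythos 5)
Your proof is correct and follows essentially the same route as the paper: the same three-term telescoping of $\frac{f(x)g(x)}{\Delta(x)}-\frac{f(y)g(y)}{\Delta(y)}$, with each piece controlled by the boundedness of $|f|/\Delta$, $|g|/\Delta$ and the uniform continuity of $f$, $g$, $\Delta$. The only (harmless) difference is bookkeeping: you extract a uniform lower bound $\Delta\geq c>0$ by applying the hypothesis to $1_X$ and use it to control the ratio $\Delta(y)/\Delta(x)$, whereas the paper simply normalizes $|f|,|g|\leq\Delta$ and orders $\Delta(x)\leq\Delta(y)$, which makes every relevant quotient at most $1$ without needing the lower bound.
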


\begin{proof}
We may assume that $|f|, |g| \leq \Delta$.
Fix $\e>0$ and take $\delta>0$ so that $|f(x)-f(y)|,|g(x)-g(y)|, |\Delta(x)-\Delta(y)|\leq \e$ for $d(x,y)\leq\delta$. Pick such $x,y\in X$ and assume 
 $\Delta(x)\leq \Delta(y)$. Then
\begin{align*}
\frac{f(x)g(x)}{\Delta(x)}&-\frac{f(y)g(y)}{\Delta(y)}\\
&=
\underbrace{\frac{f(x)g(x)}{\Delta(x)}-
\frac{f(x)g(y)}{\Delta(x)}}_{(\star)}+
\underbrace{\frac{f(x)g(y)}{\Delta(x)}
-
\frac{f(x)g(y)}{\Delta(y)}}_{(\star\star)}
+
\underbrace{\frac{f(x)g(y)}{\Delta(y)}
-
\frac{f(y)g(y)}{\Delta(y)}}_{(\star\star\star)}.
\end{align*}
Note that $|(\star)|,|(\star\star\star)|\leq\e$. As for the second summand we have 
$$
|(\star\star)|\leq \frac{|f(x)g(y)|}{\Delta(x)^2}\left(\Delta(y)-\Delta(x)\right)\leq \e,
$$
which completes the proof.
\end{proof}

We hasten to remark that
making a lattice into an $f$-ring is a relatively easy task. Indeed, suppose $\mathscr L$ is a vector lattice. Choose a nonzero homomorphism $\phi:\mathscr L\To \R$, fix some positive $h\in\mathscr L$ and set
$$
M(f, g)= \phi(f)\phi(g)h.
$$
Of course the resulting structure will not have a unit unless the linear space underlying $\mathscr L$ has dimension one.

\subsection{The rift between product and order}
We will take our leave of the reader by showing that there are very natural lattices that cannot be given any unital $f$-ring structure.

As one can imagine, we need to investigate a bit the form of the possible $f$-ring products on function lattices, which involves some technicalities. Let us begin with the following ``density'' result.

\begin{lemma}\label{lem:dens}
Suppose $\mathscr L$ is a function lattice on $X$ which contains a strictly positive function. Then the set $\{c\delta_x: 0\leq c<\infty, x\in X\}$ is dense in $H(\L)$.
\end{lemma}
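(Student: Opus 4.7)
The plan is to show that every basic neighborhood of a given $\phi \in H(\L)$ contains a functional of the form $c\delta_x$. The zero functional is trivially approximated by taking $c = 0$, so I will focus on the case $\phi \neq 0$: a typical basic neighborhood is then determined by finitely many $f_1,\ldots,f_n \in \L$ and $\varepsilon > 0$, and the goal becomes to produce $c \geq 0$ and $x \in X$ with $|cf_i(x) - \phi(f_i)| < \varepsilon$ for every $i$.

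My first step will be to replace the given strictly positive $h$ by some $\tilde h \in \L$ which is not only strictly positive but additionally satisfies $\phi(\tilde h) > 0$. Since $\phi \neq 0$, I pick any $f_* \in \L$ with $\phi(f_*) \neq 0$ and set $\tilde h = |f_*| \vee h$; this function belongs to $\L$, dominates $h$ pointwise, and satisfies $\phi(\tilde h) = |\phi(f_*)| \vee \phi(h) > 0$ because $\phi$ preserves absolute values and joins.

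The heart of the argument will then be a ``witness'' function. Setting $a_i = \phi(f_i)/\phi(\tilde h)$, I form $g = \sum_{i=1}^n |f_i - a_i \tilde h| \in \L$; by choice of the $a_i$ one sees immediately that $\phi(g) = 0$. For a small $\alpha > 0$ to be fixed at the end, the function $(\alpha \tilde h - g)^+$ lies in $\L$ with $\phi$-value $\alpha \phi(\tilde h) > 0$, so it cannot vanish identically on $X$, yielding some $x$ with $g(x) < \alpha \tilde h(x)$. A short computation with $c = \phi(\tilde h)/\tilde h(x) > 0$ then gives $|cf_i(x) - \phi(f_i)| < \alpha \phi(\tilde h)$, and choosing $\alpha < \varepsilon/\phi(\tilde h)$ closes the argument.

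The part I expect to be the main obstacle is producing $\tilde h$: the strictly positive function $h$ furnished by the hypothesis need not satisfy $\phi(h) > 0$ for a general nonzero $\phi$ (think of ``limit at the boundary'' style homomorphisms applied to functions vanishing at the boundary), so one has to upgrade $h$ by joining it with a function on which $\phi$ acts nontrivially. Once this upgrade is in hand, the remainder amounts to the familiar move of realising the desired evaluation point as a witness of nontriviality for a suitably assembled element of $\L$.
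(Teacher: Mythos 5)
Your proof is correct and follows essentially the same route as the paper's: both arguments hinge on applying $\phi$ to a lattice combination of the functions $f_i-a_ih$ (your sum $g=\sum_i|f_i-a_i\tilde h|$ versus the paper's join $\bigvee_i|c_ih-f_i|$) and extracting an evaluation point where that combination is small compared with $\e\tilde h$. The one genuine improvement is your preliminary step replacing $h$ by $\tilde h=|f_*|\vee h$ to guarantee $\phi(\tilde h)>0$; the paper simply takes a strictly positive $h$ with $\phi(h)=1$, which the hypothesis alone does not furnish, so your upgrade fills a small gap in the published argument.
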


\begin{proof}
Let $\phi$ be a nonzero homomorphism and $h\in\L$ a strictly positive function such that $\phi(h)=1$. We will prove that $\phi$ can be approximated by homomorphisms of the form $\delta_x/h(x)$. This amounts to check that given finitely many $f_i\in\L$ and $\e>0$ there is $x\in X$ such that
$$
\left|\phi(f_i)-\frac{f_i(x)}{h(x)}\right|<\e\quad\quad(1\leq i\leq n).
$$
If we assume that no such an $x$ exists, then, letting $c_i=\phi(f_i)$, we have
$
\bigvee_{1\leq i\leq n}|c_i h- f_i|\geq \e h.
$ 
But
$$
\phi\left(\bigvee_{1\leq i\leq n}|c_i h - f_i|\right)
=
\bigvee_{1\leq i\leq n}\phi(|c_i h- f_i|)
=
\bigvee_{1\leq i\leq n}|c_i \phi(h)- \phi(f_i)| =0,
$$ 
while $\phi(\e h)=\e$, a contradiction.
\end{proof}

\begin{lemma}\label{lem:bit}
Let $\L$ be a function lattice on $X$ containing a strictly positive function and having the following properties:
\begin{itemize}
\item[(a)] For every $x\in X$ there is $f\in\L$, depending on $x$, such that whenever $\phi\in H(\L)$ vanishes on $f$, then $\phi=c\delta_x$ for some $c\geq 0$.
\item[(b)] If $\phi_1,\phi_2\in H(\L)$ and $\phi_1+\phi_2\in H(\L)$, then $\phi_1$ and $\phi_2$ are linearly dependent.
\end{itemize}
Then every unital $f$-ring structure on $\L$ has the form $M(f,g)=fg/u$ for some strictly positive $u\in\L$ which is necessarily the unit of $M$.
\end{lemma}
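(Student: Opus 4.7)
I will analyse, for each $x\in X$, the homomorphisms arising by evaluating the $f$-ring product at $x$. Fix a unital $f$-ring structure $M$ on $\L$ with unit $u$. For each $x\in X$ and each positive $g\in\L$, define $\phi_{x,g}:\L\to\R$ by $\phi_{x,g}(f)=M(f,g)(x)$. Since in an $f$-ring the map $f\mapsto M(f,g)$ is a lattice homomorphism for fixed $g\geq 0$, and the evaluation $\delta_x$ is always a lattice homomorphism, the composition $\phi_{x,g}$ lies in $H(\L)$. Two immediate observations: $u=u^2\geq 0$ (squares are non-negative in $f$-rings), and $\phi_{x,u}=\delta_x$ because $M(f,u)=f$. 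Since $\L$ contains a strictly positive function, $\delta_x\neq 0$.

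The heart of the argument is an application of hypothesis (b) via additivity of $M$ in its second slot. For any positive $g\in\L$, distributivity yields
\[
\phi_{x,g}+\phi_{x,u}=\phi_{x,g+u},
\]
and the right-hand side is still in $H(\L)$ because $g+u\geq 0$. Hypothesis (b) then forces $\phi_{x,g}$ and $\delta_x=\phi_{x,u}$ to be linearly dependent; since $\delta_x\neq 0$, this produces $\phi_{x,g}=\mu(x,g)\,\delta_x$ for some scalar $\mu(x,g)\geq 0$ (non-negativity follows by testing both homomorphisms at a strictly positive $h\in\L$). Setting $f=u$ gives $g(x)=\mu(x,g)u(x)$.

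I would then check that $u$ is strictly positive. If $u(x_0)=0$ for some $x_0\in X$, then $h(x_0)=\mu(x_0,h)u(x_0)=0$ for every positive $h\in\L$, contradicting the presence in $\L$ of a strictly positive function. Therefore $u>0$ pointwise, $\mu(x,g)=g(x)/u(x)$, and
\[
M(f,g)(x)=\phi_{x,g}(f)=\mu(x,g)f(x)=\frac{f(x)g(x)}{u(x)}
\]
for all $f\in\L$, positive $g\in\L$ and $x\in X$. Bilinearity of $M$ extends this identity to arbitrary $g\in\L$, giving $M(f,g)=fg/u$; the uniqueness of $u$ as the unit of $M$ is automatic.

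The main technical point is the ``sum trick'' $\phi_{x,g}+\phi_{x,u}=\phi_{x,g+u}$, which routes everything through hypothesis (b). Curiously, hypothesis (a) does not appear to be strictly needed for this step of the argument: (b) alone, combined with the trivial identity $\phi_{x,u}=\delta_x$, already pins $\phi_{x,g}$ down as a scalar multiple of the point evaluation. I expect (a) to enter in subsequent applications of this lemma, where one must verify that a prospective lattice $\L$ actually admits no unital $f$-ring structure by ruling out the form $fg/u$ with $u$ strictly positive in $\L$.
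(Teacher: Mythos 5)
Your proof is correct, but it takes a genuinely different (and shorter) route than the paper's. The paper first establishes a general factorization $\phi(M(f,g))=\Lambda[\phi](f)\,\Sigma[\phi](g)$ valid for \emph{every} $\phi\in H(\L)$, by applying hypothesis (b) to the additivity of $f\mapsto\sigma[f,\phi]$ in the first slot; it then needs hypothesis (a), together with the nonvanishing of $\Lambda[\delta_x](u)$ and $\Sigma[\delta_x](u)$, to identify $\Lambda[\delta_x]$ and $\Sigma[\delta_x]$ as multiples of $\delta_x$. You instead work only at point evaluations and exploit the unit from the outset: since $\phi_{x,u}=\delta_x$, the sum trick $\phi_{x,g}+\phi_{x,u}=\phi_{x,g+u}$ feeds (b) a pair one of whose members is already $\delta_x$, so $\phi_{x,g}$ is pinned down immediately and hypothesis (a) becomes superfluous. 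This is a real simplification and in fact a mild strengthening of the lemma; nothing is lost in the paper's application, since the example verifies (a) and (b) simultaneously by showing every homomorphism on $U(\I\times\N)$ is a weighted point evaluation. What the paper's longer argument buys is the description of $\phi(M(f,g))$ for arbitrary homomorphisms $\phi$, not just evaluations, though that extra generality is not used. One point you should make explicit: your step $\phi_{x,g+u}\in H(\L)$ requires $g+u\geq 0$, hence $u\geq 0$; this follows from the standard $f$-ring fact that squares are positive (so $u=M(u,u)\geq 0$), which you invoke correctly but should reference (e.g.\ Birkhoff--Pierce, or \cite[Chapter 3]{steinberg}).
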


\begin{proof}
Let $M:\L\times\L\to\L$ be a bilinear map giving an $f$-ring structure.
We want to see that there exist two mappings $\Lambda,\Sigma:H(\L)\To H(\L)$ such that 
\begin{equation}\label{phiM}
\phi(M(f,g))=\Lambda[\phi](f) \Sigma[\phi](g)\quad\quad(f,g\in\L, \phi\in H(\L)).
\end{equation}
The notation means that $\Lambda[\phi]$ and $\Sigma[\phi]$  are homomorphisms depending on $\phi$ acting on $f$ and $g$, respectively.

If we fix a positive $f\in \L$ and $\phi\in H(\L)$, then the map $g\in\L\mapsto \phi(M(f,g))\in\R$ is a homomorphism depending on $f$ and $\phi$. Denoting it by $\sigma[f,\phi]$ we have 
$$
\phi(M(f,g))=\sigma[f,\phi](g)\quad\quad(f,g\in\L, \phi\in H(\L)).
$$
Since $\sigma[f,\phi]$ is additive in the first variable in the sense that if $f_1,f_2$ are positive in $\L$, then $\sigma[f_1+f_2,\phi]= \sigma[f_1,\phi]+\sigma[f_2,\phi]$, the hypothesis (b) implies that there exist a homomorphism $\Sigma[\phi]$ and a linear map $\Lambda[\phi]:\L\to\R$ such that $\sigma[f,\phi]= \Lambda[\phi](f)\Sigma[\phi]$, that is,
$$
\phi(M(f,g))=\Lambda[\phi](f) \Sigma[\phi](g)\quad\quad(f,g\in\L, \phi\in H(\L)).
$$
To be true the preceding formula is proven only for $f\geq 0$. The general case follows by decomposing $f$ into its positive and negative parts.

Now, considering $\phi$ fixed, either $\Sigma[\phi](g)=0$ for all $g\in\L$ and so $M=0$, which cannot be since $M$ has a unit, or taking some $g$ such that $\Sigma[\phi](g)\neq 0$ we see that the linear map $\Lambda[\phi]$ is also a homomorphism, and we have proved (\ref{phiM}).

Let $u\in\L$ be the unit of $M$, so that $f=M(f,u)=M(u,f)$ for all $f\in\L$ and suppose $\phi\neq 0$. Taking $f\in \L$ such that $\phi(f)\neq 0$ we have
$$
0\neq \phi(f)=\begin{cases}
\phi(M(f,u))=\Lambda[\phi](f)\Sigma[\phi](u),\\
\phi(M(u,f ))=\Lambda[\phi](u)\Sigma[\phi](f)\end{cases}
$$
so $\Sigma[\phi](u), \Lambda[\phi](u) \neq 0$.
To end, pick $x\in X$ and take $f$ as in (a). We then have
$$
0=f(x)= \begin{cases} \delta_x(M(f,u))=\Lambda[\delta_x](f) \Sigma[\delta_x](u)\\
\delta_x(M(u,f))=\Lambda[\delta_x](u) \Sigma[\delta_x](f),
\end{cases}
$$
and since  $\Sigma[\delta_x](u), \Lambda[\delta_x](u)\neq 0$ we see that $\Lambda[\delta_x]=c(x)\delta_x$ and $\Sigma[\delta_x]=d(x)\delta_x$, hence
$$
M(f,g)(x)=c(x)f(x)d(x)g(x)\quad\quad(f,g\in \L, x\in X),
$$
and taking $u(x)=1/(c(x)d(x))$ we are done.
\end{proof}

\begin{example}
Let $\mathbb I=[0,1]$ be the unit interval and consider the following distance on $\I\times\N$:
$$
d((s,n),(t,k))=\begin{cases}
|s-t| & \text{if $n=k$}\\
1 & \text{otherwise}.
\end{cases}
$$
Then the lattice $U(\I\times \N)$ has no unital $f$-ring structure.
\end{example}

\begin{proof}
The lattice $U(\I\times\N)$ has the relevant properties of the preceding Lemma just because each homomorphism $\phi: U(\I\times\N)\To \R$ is a multiple of the evaluation at some point of $\I\times\N$. Indeed, let $\phi: U(\I\times\N)\To \R$ be a nonzero homomorphism and take a strictly positive $h\in U(\I\times\N)$ such that $\phi(h)=1$. On account of Lemma~\ref{lem:dens} and its proof there is an ultrafilter $\U$ on $\I\times\N$ such that
$$
\phi=\lim_{\U(t,n)}\frac{\delta_{(t,n)}}{h(t,n)}.
$$
Let $\V$ and $\mathscr W$ be the ultrafilters obtained by projecting $\U$ onto the first and second coordinates of $\I\times\N$, respectively. It is clear that $\V$ converges to a point $t^*\in \I$, by compactness. We claim that $\mathscr W$ is fixed: otherwise take $M_n=\max_{s\in\I}h(s,n)$ and define $f:\I\times\N\to\R$ letting $f(t,n)=n\cdot M_n$. Then
$$
\phi(f)=\lim_{\U(t,n)}\frac{f(t,n)}{h(t,n)}= \lim_{\U(t,n)}\frac{nM_n}{h(t,n)}\geq 
\lim_{\U(t,n)} n= \lim_{\mathscr W(n)} n=\infty,
$$
an absurd. Hence $\mathscr W$ converges to some $n^*\in \N$ and it follows from the basics on filter convergence (\cite[Chapter~4,\S 12]{wi}) that $\U$ converges to $(t^*, n^*)$ and so
$$
\phi=\frac{\delta_{(t^*,n^*)}}{h(t^*,n^*)},
$$
as desired.

Now suppose $M$ is a unital $f$-ring structure on $U(\I\times\N)$. Then, by Lemma~\ref{lem:bit}, one has 
$$
M(f,g)={fg}/{u},
$$
where $u>0$ is the unit of $M$. Let us consider the functions $f,g:\I\times\N\to\R$ defined by $f(t,n)=t$ and $g(t,n)=n\cdot u(1,n)$. Then $M(f,g)(0,n)=0$ for all $n\in\N$, while $M(f,g)(1,n)=n$ which prevents $M(f,g)$ from being uniformly continuous on $\I\times \N$ since the family $t\mapsto M(f,g)(t,n)$ is not equicontinuous.
\end{proof}

Note that $\I\times \N$ with the given metric is uniformly homeomorphic to a subset of $\H$, namely to $\bigcup_{n\geq 1}[2n-1,2n]$. This example is used, with a different purpose, in \cite[Example~1]{ga-ja}.
\medskip

Thus, while some lattices $U(X)$ are straight rings under the pointwise product, there exist metric spaces $X$ for which $U(X)$ is not closed for pointwise multiplication yet it supports a ``hidden'' unital $f$-ring structure and there are other metric spaces for which $U(X)$ cannot be given such structures at all.

Even the most favourable case where $U(X)$ is a ring with the pointwise product presents various shades ranging from Atsuji spaces (all continuous functions are uniformly continuous) to Bourbaki bounded spaces (all uniformly continuous functions are bounded); see \cite{javi} for some ``intermediate'' examples. We can refer the interested reader to the recent paper \cite{b-g-m} for information on this line of research, proper references and many other things.

\subsection*{Open questions}
Let us mention a few questions that arise naturally from the content of the paper. First, it is unclear for which metric spaces $X$ the lattices $\Lip(X)$ and $\Lip^*(X)$ are isomorphic, either as vector lattices or as ordered sets. We have got just one example where such isomorphism exists and it is a rather ``small'' space. It would be interesting to know what happens to ``larger'' spaces, in particular to Urysohn universal space; see \cite{husek}.

Despite our efforts we have been unable to isolate any property, defined for vector lattices, distinguishing $U$ from $U^*$. In a different direction we do not known whether the existence of a ``dominant'' function in $U(X)$ guarantees that $\Lip(X)$ is uniformly dense in $U(X)$. The converse is nearly obvious.

\section*{Acknowledgements}
It is a pleasure to thank the referee for the careful reading of the original \LaTeX-script. He/she provided the sharp version of Proposition~\ref{prop:sharp} that appears in the text and contributed to a more logical organization of the paper.

The content of this paper was presented in a session of the ``Colloquium of the Department of Mathematical Analysis'' held at the Universidad Complutense (Madrid, Spain) in  November 27, 2018.
We thank the organizers, especially Nacho Villanueva, for their customary warm hospitality.

\bibliographystyle{amsplain}

\begin{thebibliography}{00}





\bibitem{turkiyos}
\.{I}. \.I lker Ak\c ca, Mahmut Ko\c cak,
Non-homogeneity of the remainder $s\R\backslash\R$
of the Samuel compactification of $\R$, Top. Appl. 149 (2005) 239--242.

\bibitem{b-g-m}
Gerald Beer, M. Isabel Garrido, Ana S. Mero\~no, Uniform Continuity and a New Bornology for a Metric Space. Set-Valued Var. Anal. 26 (2018), no. 1, 49--65.

\bibitem{b-s}
 Yoav Benyamini, Yaki Sternfeld,
Spheres in Infinite-Dimensional Normed Spaces are Lipschitz Contractible,
Proc. Amer. Math. Soc. 88 (1983) 439--445.

\bibitem{jlb}
Jorge Luis Borges, Ficciones. Emec\'e, Buenos Aires (1956). There is an English translation, edited by Anthony Kerrigan, published by Grove Press, New York (1962).

\bibitem{fine}
F\'elix Cabello S\'anchez,
Fine structure of the homomorphisms of the lattice
of uniformly continuous functions on the line, submitted to Positivity.
 
\bibitem{javi}
Javier Cabello S\'anchez, $U(X)$ as a ring for metric spaces $X$. Filomat 31 (2017), no. 7, 1981--1984.

\bibitem{g-j-MfM}
M. Isabel Garrido,  Jes\'us \'A. Jaramillo, Homomorphisms on function lattices, Monatsh. Math. 141 (2004) 127--146.


\bibitem{ga-ja}
M. Isabel Garrido,  Jes\'us \'A. Jaramillo, Lipschitz-type functions on metric spaces,
J. Math. Anal. Appl. 340 (2008) 282--290.


\bibitem{golab}
S. Go\l \c ab, Quelques probl\`emes m\'etriques de la g\'eometrie de Minkowski,
 Travaux de  l'Acad\'emie des Mines \`a Cracovie 6
(1932) 1--79.


\bibitem{husek}
Miroslav Hu\v sek, Urysohn universal space, its development and Hausdorff's approach,
Top. Appl. 55 (2008) 1493--1501.


\bibitem{johnson}
Donald Glen Johnson, A structure theory for a class of lattice-ordered rings, Acta Math.
104 (1960) 163--215.

\bibitem{l-t}
Denny H. Leung, Wee Kee Tang, Nonlinear order isomorphisms on function spaces, Dissertationes Math. 517 (2016) 1--75.




\bibitem{schaeffer}
Juan Jorge Sch\"affer, Geometry of Spheres in Normed Spaces, Lecture Notes in Pure and Applied Mathematics,
vol. 20, Dekker, New York (1976).


\bibitem{steinberg}
Stuart A. Steinberg,
Lattice-ordered rings and modules. Springer, New York, 2010.

\bibitem{japo}
Jun-Ichi Tanaka, Flows in fibers, Trans. Amer. Math. Soc. 343 (1994) 779--804.


\bibitem{wi}
Stephen Willard, General Topology, Addison-Wesley, Reading, 1970.


\end{thebibliography}

\end{document}